\theoremstyle{plain}
\newtheorem{thm}{\protect\theoremname}
  \theoremstyle{plain}
  \newtheorem{prop}[thm]{\protect\propositionname}
\newcommand{\lyxaddress}[1]{
\par {\raggedright #1
\vspace{1.4em}
\noindent\par}
}
\newcounter{EQNR}
  \providecommand{\propositionname}{Proposition}
\providecommand{\theoremname}{Theorem}
\begin{document}

\title{Volumes of spheres and special values of zeta functions of $\mathbb{Z}$
and $\mathbb{Z}/n\mathbb{Z}$}

\author{Anders Karlsson\footnote{The first author was supported in part by the Swedish Research Council grant 104651320 and the Swiss NSF grant 200020-200400.}
\,\,\,and Massimiliano Pallich}

\date{September 6, 2022}
\maketitle
\begin{abstract}
The volume of the unit sphere in every dimension is given a new interpretation
as a product of special values of the zeta function of $\mathbb{Z}$,
akin to volume formulas of Minkowski and Siegel in the theory of arithmetic
groups. A product formula is found for this zeta function that specializes
to Catalan numbers. Moreover, certain closed-form expressions for
various other zeta values are deduced, in particular leading to an
alternative perspective on Euler's values for the Riemann zeta function.
\end{abstract}

\section{Introduction}

The determination of circumference, area, and volume of spheres is
one of the oldest topics in geometry. A more sophisticated volume
formula, due to Minkowski, is the following:
\[
\mathrm{vol(}\mathrm{SL}_{n}(\mathbb{R})/\mathrm{SL}_{n}(\mathbb{Z}))=\zeta(2)\zeta(3)...\zeta(n),
\]
where $\zeta(s)$ is the Riemann zeta function, and with a suitable
coherent choice of normalization of the Haar measure. This is not
an incidental fact, instead it is part of a more general phenomenon
discovered and developed by Siegel, Weil, Langlands, Harder, and others.
It takes the form 
\[
\mathrm{vol}(G/\Gamma)=c^{[K:\mathbb{Q}]}\prod_{i=1}^{l}\zeta_{K}(-m_{i}),
\]
and without going into details about this formula and when it holds
(see \cite{H71,P01} for more information), let us just highlight
one further example: 
\[
\mathrm{vol(}\mathrm{Sp}_{n}(\mathbb{R})/\mathrm{Sp}_{n}(\mathbb{Z}))=\zeta(2)\zeta(4)...\zeta(2n).
\]

The passage from negative integers to positive ones is done by means
of the fundamental functional equation. In the present paper we will
in particular provide a new zeta value interpretation of the $(n-1)$-dimensional
volume of spheres, which are the homogeneous spaces $S^{n-1}\cong\mathrm{SO}(n)/\mathrm{SO}(n-1)$.

As is well known, the Riemann zeta function is essentially the spectral
zeta function of the circle $\mathbb{R}/\mathbb{Z}$, more precisely
\[
\zeta(s)=\frac{1}{2}(2\pi)^{s}\zeta_{\mathbb{R}/\mathbb{Z}}(s/2)=\frac{(2\pi)^{s}}{2}\frac{1}{\Gamma(s)}\int_{0}^{\infty}\frac{1}{\sqrt{4\pi t}}\sum_{k\neq0}e^{-k^{2}/4t}t^{s/2}\frac{dt}{t},
\]
for $\mathrm{Re}(s)<1$ and then extended by meromorphic continuation.
From Fourier analysis we know that the circle and the integers are
dual groups. The function $e^{-2t}I_{0}(2t),$ with a Bessel function
appearing, is the $\mathbb{Z}$-analog of the theta series for the
circle, found inside the Mellin transform expression above. Therefore,
entirely analogously to $\mathbb{R}/\mathbb{Z}$ as is explained in
\cite{FK17}, one can define the spectral zeta function of $\mathbb{Z}$
as 
\[
\zeta_{\mathbb{Z}}(s)=\frac{1}{\Gamma(s)}\int_{0}^{\infty}e^{-2t}I_{0}(2t)t^{s}\frac{dt}{t},
\]
for $0<\mathrm{Re(s)<1/2}$ and then extended by meromorphic continuation
as will soon become clear. We then define in parallel to the above
the function
\[
Z(s)=\frac{1}{2}2\pi2^{s}\zeta_{\mathbb{Z}}(s/2),
\]
which shares with $\zeta(s)$ the property of having a functional
equation of the type of a $s\longleftrightarrow1-s$ symmetry (\cite{FK17}).
We observe the following formula very reminiscent of the above volume
expressions:
\begin{thm}
\label{prop:volume}For $n>0$, the $n$-dimensional volume of the
unit sphere in $\mathbb{R}^{n+1}$ is
\[
\mathrm{vol}(S^{n})=2\cdot Z(0)\cdot Z(-1)...\cdot Z(-n+1).
\]
\end{thm}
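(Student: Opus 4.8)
The plan is to reduce the statement to the classical surface-area formula $\mathrm{vol}(S^{n})=2\pi^{(n+1)/2}/\Gamma\!\big(\tfrac{n+1}{2}\big)$, so that the theorem becomes a closed-form evaluation of each factor $Z(-k)$ followed by a telescoping of their product. The decisive first step is therefore to produce a closed form for $\zeta_{\mathbb{Z}}(s)$, and hence for $Z(s)$, valid on the strip $0<\mathrm{Re}(s)<1/2$ and then extended by meromorphic continuation.

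To obtain it I would substitute the integral representation $I_{0}(2t)=\tfrac{1}{\pi}\int_{0}^{\pi}e^{2t\cos\phi}\,d\phi$ into the defining Mellin integral and interchange the order of integration, which is legitimate by absolute convergence throughout the strip. The inner integral in $t$ is then a $\Gamma$-integral with decay rate $2(1-\cos\phi)=4\sin^{2}(\phi/2)$, and after dividing by $\Gamma(s)$ one is left with an elementary Beta integral,
\[
\zeta_{\mathbb{Z}}(s)=\frac{1}{\pi\,4^{s}}\int_{0}^{\pi}\sin^{-2s}(\phi/2)\,d\phi=\frac{1}{\sqrt{\pi}\,4^{s}}\,\frac{\Gamma\!\big(\tfrac12-s\big)}{\Gamma(1-s)}.
\]
This closed form supplies the meromorphic continuation for free, and feeding it into $Z(s)=\pi\,2^{s}\,\zeta_{\mathbb{Z}}(s/2)$ collapses the powers of $2$ and yields the clean expression
\[
Z(s)=\sqrt{\pi}\,\frac{\Gamma\!\big(\tfrac{1-s}{2}\big)}{\Gamma\!\big(\tfrac{2-s}{2}\big)}.
\]

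Evaluating at the nonpositive integers gives $Z(-k)=\sqrt{\pi}\,\Gamma\!\big(\tfrac{k+1}{2}\big)/\Gamma\!\big(\tfrac{k+2}{2}\big)$, where both arguments are positive for $k\ge 0$, so no factor is singular. Since the denominator $\Gamma\!\big(\tfrac{k+2}{2}\big)=\Gamma\!\big(\tfrac{(k+1)+1}{2}\big)$ of the $k$-th factor is exactly the numerator of the $(k+1)$-st, the product telescopes:
\[
\prod_{k=0}^{n-1}Z(-k)=\pi^{n/2}\prod_{k=0}^{n-1}\frac{\Gamma\!\big(\tfrac{k+1}{2}\big)}{\Gamma\!\big(\tfrac{k+2}{2}\big)}=\pi^{n/2}\,\frac{\Gamma(\tfrac12)}{\Gamma\!\big(\tfrac{n+1}{2}\big)}=\frac{\pi^{(n+1)/2}}{\Gamma\!\big(\tfrac{n+1}{2}\big)}.
\]
Multiplying by $2$ reproduces $\mathrm{vol}(S^{n})$, which finishes the argument.

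The only genuinely delicate point is establishing the closed form: one must justify the interchange of integrations, correctly identify the resulting Mellin/Beta integral, and then confirm that continuing this formula to the negative reals introduces no spurious poles or zeros at the relevant points. This is easily checked, since the poles of $\Gamma\!\big(\tfrac12-s\big)$ sit at the positive half-integers $s=\tfrac12,\tfrac32,\dots$ and those of $\Gamma\!\big(\tfrac{2-s}{2}\big)$ at $s=2,4,\dots$, all away from the points $s=-k$ with $0\le k\le n-1$ used here. Once the formula for $Z(s)$ is in hand, the remainder is the one-line telescoping above together with the matching of $\Gamma\!\big(\tfrac{n+1}{2}\big)$ to the Gamma-function form of the sphere volume.
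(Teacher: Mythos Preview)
Your proof is correct and follows essentially the same route as the paper: both arguments rest on the closed form $\zeta_{\mathbb{Z}}(s)=\frac{1}{4^{s}\sqrt{\pi}}\,\Gamma(\tfrac12-s)/\Gamma(1-s)$, which makes each $Z(-k)$ a ratio of consecutive Gamma values and reduces the product to the classical formula $\mathrm{vol}(S^{n})=2\pi^{(n+1)/2}/\Gamma(\tfrac{n+1}{2})$. The only cosmetic differences are that the paper cites this closed form from \cite{FK17,D19} rather than deriving it via the Bessel--Beta computation you give, and that it phrases the telescoping as the inductive step $\mathrm{vol}(S^{n})/\mathrm{vol}(S^{n-1})=Z(-n+1)$ instead of multiplying the product out directly.
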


It turns out, by \cite{FK17}, and then by Dubout's formula in \cite{D19},
that $\zeta_{\mathbb{Z}}(s)$ is an analytic continuation essentially
of the Catalan numbers
\[
\zeta_{\mathbb{Z}}(s)=\frac{1}{4^{s}\sqrt{\pi}}\frac{\Gamma(1/2-s)}{\Gamma(1-s)}=\left(\begin{array}{c}
-2s\\
-s
\end{array}\right).
\]
The appearance of the gamma function begins explaining the volume
formula. Indeed, with the values $Z(0)=\pi$, $Z(-1)=2$ and $Z(-2)=\pi/2$
one sees that the volume formula is true for the first cases. One
could alternatively say that the Catalan numbers 
\[
C_{m}=\frac{1}{m+1}\left(\begin{array}{c}
2m\\
m
\end{array}\right),
\]
but now at the nonstandard indices $m=k/2$, appear in the classical
expression of the surface area of spheres.

The spectral zeta function of $\mathbb{Z}$ has the following beautiful
product formula:
\begin{thm}
It holds that 
\[
\zeta_{\mathbb{Z}}(s)=\prod_{k=1}^{\infty}\frac{(k-s)^{2}}{k(k-2s)}
\]
interpreted suitably when $s$ is a positive integer or half-integer.
\end{thm}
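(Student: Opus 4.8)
The plan is to reduce everything to the closed-form expression $\zeta_{\mathbb{Z}}(s)=\frac{1}{4^{s}\sqrt{\pi}}\frac{\Gamma(1/2-s)}{\Gamma(1-s)}$ already recorded above, and to show that the infinite product collapses to the same gamma quotient. First I would check convergence: writing the general factor as
\[
\frac{(k-s)^{2}}{k(k-2s)}=1+\frac{s^{2}}{k(k-2s)},
\]
one sees it equals $1+O(k^{-2})$, so for $s$ away from the excluded points the product converges absolutely and defines a meromorphic function.

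Next I would evaluate the partial products explicitly. Using $\prod_{k=1}^{N}(k-a)=\Gamma(N+1-a)/\Gamma(1-a)$ for $a=s$ and $a=2s$, together with $\prod_{k=1}^{N}k=\Gamma(N+1)$, the $N$-th partial product becomes
\[
P_{N}(s)=\frac{\Gamma(1-2s)}{\Gamma(1-s)^{2}}\cdot\frac{\Gamma(N+1-s)}{\Gamma(N+1)}\cdot\frac{\Gamma(N+1-s)}{\Gamma(N+1-2s)}.
\]
The two trailing ratios are where the real work sits: individually they diverge, but by the standard asymptotic $\Gamma(N+a)/\Gamma(N+b)\sim N^{a-b}$ the first behaves like $N^{-s}$ and the second like $N^{s}$, so their product tends to $1$. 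Hence $\lim_{N\to\infty}P_{N}(s)=\Gamma(1-2s)/\Gamma(1-s)^{2}$. This limiting step, and making the $N^{a-b}$ estimate sufficiently precise to conclude on the nose, is the main obstacle; it can be handled cleanly with Stirling's formula or the known uniform asymptotics of gamma ratios.

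Finally I would identify $\Gamma(1-2s)/\Gamma(1-s)^{2}$ with $\zeta_{\mathbb{Z}}(s)$ via Legendre's duplication formula $\Gamma(z)\Gamma(z+1/2)=2^{1-2z}\sqrt{\pi}\,\Gamma(2z)$ evaluated at $z=1/2-s$, which yields $\Gamma(1/2-s)\Gamma(1-s)=4^{s}\sqrt{\pi}\,\Gamma(1-2s)$ and hence
\[
\frac{\Gamma(1-2s)}{\Gamma(1-s)^{2}}=\frac{1}{4^{s}\sqrt{\pi}}\frac{\Gamma(1/2-s)}{\Gamma(1-s)}=\zeta_{\mathbb{Z}}(s).
\]
For the caveat about positive integers and half-integers, I would note that both sides are meromorphic in $s$, so the identity proved on an open subset extends by analytic continuation; the phrase ``interpreted suitably'' then refers to reading the product as the limit of its partial products at the points where a factor $(k-s)$ or $(k-2s)$ vanishes, matching the zeros and poles of the gamma quotient.
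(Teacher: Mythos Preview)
Your proof is correct. The paper's argument also rests on the gamma-quotient expression for $\zeta_{\mathbb{Z}}(s)$---in the form $\Gamma(1-2s)/\Gamma(1-s)^{2}$, which it calls Dubout's expression and which is equivalent to your starting formula via exactly the Legendre duplication step you carry out---but instead of computing partial products directly it quotes Euler's product formula for the beta function,
\[
B(x,y)=\frac{x+y}{xy}\prod_{k\ge 1}\frac{1+(x+y)/k}{(1+x/k)(1+y/k)},
\]
substitutes $x=y=1-s$, and then reindexes the resulting product into the stated form. Your route is essentially an \emph{ab initio} proof of that beta-product identity in the special case $x=y$: the gamma-ratio partial product together with the Stirling-type estimate $\Gamma(N+a)/\Gamma(N+b)\sim N^{a-b}$ is precisely how the beta product is usually established. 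So the paper's version is shorter if the beta formula is taken off the shelf, while yours is more self-contained and makes the convergence of the product explicit, which the paper leaves implicit in the citation.
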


From this expression, the simple zeros and poles of $\zeta_{\mathbb{Z}}(s)$
at the positive integers and half-integers respectively, are clearly
visible. As will be shown below, it specializes to the standard product
expression for Catalan numbers:
\[
C_{m}=\prod_{k=2}^{m}\frac{m+k}{k}.
\]

In the following we shall also provide an exposition of special values
of $\zeta_{\mathbb{Z}}(s)$ and $\zeta(s)$, including correcting
certain minor inaccuracies in \cite{FK17}, as well as studying the
related and analogously defined spectral zeta function of $\mathbb{Z}/n\mathbb{Z}$,
\[
\zeta_{\mathbb{Z}/n\mathbb{Z}}(s)=\frac{1}{4^{s}}\sum_{k=1}^{n-1}\frac{1}{\sin^{2s}(\pi k/n)}.
\]
In particular, we will provide an elementary calculation of $\zeta_{\mathbb{Z}/n\mathbb{Z}}(1/2-m)$,
and indicate how then to pass to Euler's $\zeta(1-2m)$ values from
the asymptotics expansions in \cite{Si04,FK17,MV22}. After that via
the symmetry $s$ vs $1-s$ one gets $\zeta(2m)$, which leads to
$\zeta_{\mathbb{Z}/n\mathbb{Z}}(m).$ 

In this way we relate $\zeta_{\mathbb{Z}/n\mathbb{Z}}(2m/2)$ to $\zeta_{\mathbb{Z}/n\mathbb{Z}}((1-2m)/2)$.
Note that asymptotical symmetries of the type $s$ vs $1-s$ for $\zeta_{\mathbb{Z}/n\mathbb{Z}}(s/2)$
and related finite sums is not a trivial matter, in fact a certain
version of it is equivalent to various Riemann hypotheses, for $\zeta(s)$
as shown in \cite{FK17}, for certain Dirichlet $L$-functions as
proven in \cite{F16}, and for the Dedekind zeta function of the Gaussian
rationals as established in \cite{MV22}.

\textbf{Acknowledgement: }It is a pleasure to thank Fabien Friedli
for several useful remarks.

\section{Proof of the volume and product formulas}

\textbf{Proof of the volume formula. }The well-known volume (hypersurface
area) of spheres is 
\[
\mathrm{vol}(S^{n})=\frac{2\pi^{(n+1)/2}}{\Gamma((n+1)/2)},
\]
generating the numbers $2,2\pi,4\pi,2\pi^{2}$ etc. 

This leads to the following calculation
\[
\frac{\mathrm{vol}(S^{n})}{\mathrm{vol}(S^{n-1})}=\frac{2\pi^{(n+1)/2}}{\Gamma((n+1)/2)}\frac{\Gamma(n/2)}{2\pi^{n/2}}=\frac{\sqrt{\pi}\Gamma(n/2)}{\Gamma(n/2+1/2)}.
\]
This we can rewrite as follows
\[
\frac{\mathrm{vol}(S^{n})}{\mathrm{vol}(S^{n-1})}=\frac{\sqrt{\pi}\Gamma(1/2-(1/2-n/2)}{\Gamma(1-(1/2-n/2))}=\pi4^{1/2-n/2}\zeta_{\mathbb{Z}}((1-n)/2)=Z(-n+1).
\]

Since $\mathrm{vol}(S^{0})=2$ we inductively arrive at the proof
of the formula stated in Theorem \ref{prop:volume}.

\textbf{Proof of the product formula. }Recall that the Euler's beta
function has the following product formula \cite[Formula 1.1.26]{AAR99}:
\[
B(x,y)=\frac{\Gamma(x)\Gamma(y)}{\Gamma(x+y)}=\frac{x+y}{xy}\prod_{k=1}^{\infty}\frac{(1+(x+y)/k)}{(1+x/k)(1+y/k)}
\]

with poles for $x$ or $y$ equal to $0$ or a negative integer, and
analytic elsewhere. On the other hand, Dubout's expression gives
\[
\zeta_{\mathbb{Z}}(s)=\frac{\Gamma(1-2s)}{\Gamma(1-s)\Gamma(1-s)}=\frac{\Gamma(2-2s)}{(1-2s)\Gamma(1-s)\Gamma(1-s)}=\frac{(1-s)^{2}}{(1-2s)(2-2s)}\prod_{k=1}^{\infty}\frac{(1+(1-s)/k)^{2}}{(1+(2-2s)/k)}
\]

\[
=\frac{(1-s)^{2}}{(1-2s)(2-2s)}\prod_{k=1}^{\infty}\frac{(k+1-s)^{2}}{k(k+2-2s)}=\prod_{k=1}^{\infty}\frac{(k-s)^{2}}{k(k-2s)},
\]
as was to be proved. 

To see that it gives back a correct expression in the case of $s=-m$
a negative integers (the Catalan number case) note that except for
the small values, the others integer appear twice in the numerator
as well as in the denominator:
\[
C_{m}=\frac{1}{m+1}\zeta_{\mathbb{Z}}(-m)=\frac{1}{m+1}\prod_{k=1}^{\infty}\frac{(k+m)^{2}}{k(k+2m)}
\]
\[
=\frac{1}{m+1}\frac{(m+1)^{2}(m+2)^{2}...}{1\cdot2...(1+2m)(2+2m)...}=\prod_{k=2}^{m}\frac{m+k}{k},
\]
which is an expression whose validity is immediate from the definition
of $C_{m}$.

\section{Special values of $\zeta_{\mathbb{Z}}(s)$}

Here is a summary and proof of special values for $\zeta_{\mathbb{Z}}$
and its derivative. In particular, at the negative integers, $\zeta_{\mathbb{Z}}(-m)$
is rational even integral. This is not a priori obvious, but in view
of the Dubout formula for $\zeta_{\mathbb{Z}}$ this becomes clear
since they are just binomial coefficients at these points. This is
the counterpart of theorems by Hecke, Siegel, Klingen, and others
for classical zeta functions. 

The following is taken from \cite{FK17,Pa22}. First, we recall that for integers $n \geq0$, we have \begin{align*}     \Gamma(n+1) = n!     \quad \textrm{and} \quad       \Gamma \left( \frac{1}{2}+n \right)      = \frac{(2n)!}{4^n n!} \pi^{\frac{1}{2}},  \end{align*} the latter being the Legendre duplication formula. Thus, by using these formulas and the expressions above, we get for integers $n\geq 1$ that  \begin{equation} \label{spec_zeta_Z1} \begin{aligned}      & \zeta_{\mathbb{Z}}(0)=4^0 \pi^{-\frac{1}{2}} \frac{\Gamma(\frac{1}{2})}{\Gamma(1)}=1, \\     &\zeta_{\mathbb{Z}}(-n)     = 4^n \pi^{-\frac{1}{2}} \frac{\Gamma(\tfrac{1}{2}+n)}{\Gamma(1+n)}     = \frac{(2n)!}{n!n!}= \binom{2n}{n}, \\         &\zeta_{\mathbb{Z}} (-n+\tfrac{1}{2})= 4^n \pi^{-\frac{1}{2}} \frac{\Gamma(n)}{\Gamma(\tfrac{1}{2}+n)}     = \frac{4^{2n}}{2\pi n} \frac{n! n!}{(2n)!}     = \frac{4^{2n}}{2 \pi n} \binom{2n}{n}^{-1}. \end{aligned} \end{equation} 

As said before the values at positive integers and positive half-integers
are precisely the zeros and poles. 

 Now, if we differentiate $\zeta_{\mathbb{Z}}(s)$, we get \begin{align*}     \zeta_{\mathbb{Z}}'(s)     = \pi^{-\frac{1}{2}} \left( \frac{\Gamma(\frac{1}{2}-s)}{4^s \Gamma(1-s)} \right)'     &= \pi^{-\frac{1}{2}} \frac{-\Gamma(\frac{1}{2}-s)\psi_0(\frac{1}{2}-s)-\Gamma(\frac{1}{2}-s)\big(\log(4)-\psi_0(1-s)\big)}{4^s\Gamma(1-s)} \\     &= \zeta_{\mathbb{Z}}(s) \left( -\psi_0 \left( \frac{1}{2}-s \right)-2\log(2)+\psi_0(1-s) \right),     \phantom{\frac{}{}} \end{align*} since $\Gamma(\frac{1}{2}-s)'= -\Gamma(\frac{1}{2}-s) \psi_0(\frac{1}{2}-s)$ and $(4^s\Gamma(1-s))'=4^s\Gamma(1-s)(\log(4)-\psi_0(1-s))$, and where $\psi_{n}(s)$ is the polygamma function, which is defined by \begin{align*}     \psi_n(s)= \frac{d^{n}}{ds^n} \frac{\Gamma'(s)}{\Gamma(s)}. \end{align*} We can therefore deduce the following special values of $\zeta_{\mathbb{Z}}'$. 
\begin{prop}
It holds that \begin{align*}     \zeta_{\mathbb{Z}}'(0) = 0     \quad \textrm{and} \quad     \zeta_{\mathbb{Z}}' (-\tfrac{1}{2})     = \frac{8}{\pi}(1-2\log(2)), \end{align*} and \begin{align*}     & \zeta_{\mathbb{Z}}'(-n)     = \binom{2n}{n} \left( \sum_{k=1}^{n} \frac{1}{k}- \frac{2}{2k-1} \right)      && \text{for $n \geq 1$},     \\     &\zeta_{\mathbb{Z}}'(-n+\tfrac{1}{2})     = \frac{4^{2n}}{2 \pi n}\binom{2n}{n}^{-1} \left( -4\log(2) - \sum_{k=1}^{n-1} \frac{1}{k} + 2\sum_{k=1}^{n} \frac{1}{2k-1}     \right)     && \text{for $n \geq 2$}. \end{align*} 
\end{prop}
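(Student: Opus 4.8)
The plan is to substitute the four special arguments directly into the logarithmic-derivative formula
$\zeta_{\mathbb{Z}}'(s) = \zeta_{\mathbb{Z}}(s)\bigl(-\psi_0(\tfrac12-s) - 2\log(2) + \psi_0(1-s)\bigr)$
established just above, taking the prefactor $\zeta_{\mathbb{Z}}(s)$ from the values already recorded in \eqref{spec_zeta_Z1} and evaluating the bracketed digamma combination in closed form. The only external ingredients required are the standard evaluations of $\psi_0$ at integers and half-integers. These all follow from the recursion $\psi_0(x+1) = \psi_0(x) + \tfrac1x$ together with the base values $\psi_0(1) = -\gamma$ and $\psi_0(\tfrac12) = -\gamma - 2\log(2)$; iterating yields $\psi_0(n+1) = -\gamma + \sum_{k=1}^n \tfrac1k$ and $\psi_0(n+\tfrac12) = -\gamma - 2\log(2) + 2\sum_{k=1}^n \tfrac{1}{2k-1}$, which are the forms I would insert.

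First I would dispose of the two isolated points. For $s=0$ the prefactor is $\zeta_{\mathbb{Z}}(0)=1$ and the bracket is $-\psi_0(\tfrac12) - 2\log(2) + \psi_0(1) = (\gamma + 2\log(2)) - 2\log(2) - \gamma = 0$, giving $\zeta_{\mathbb{Z}}'(0)=0$. For $s=-\tfrac12$ the prefactor is $\zeta_{\mathbb{Z}}(-\tfrac12)=4/\pi$ (the $n=1$ instance of \eqref{spec_zeta_Z1}), while $\psi_0(1)=-\gamma$ and $\psi_0(\tfrac32)=-\gamma - 2\log(2) + 2$ make the bracket equal to $2(1-2\log(2))$; the product is $\tfrac{8}{\pi}(1-2\log(2))$.

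For the two families the mechanism is identical. At $s=-n$ the prefactor is $\binom{2n}{n}$ and the bracket is $-\psi_0(n+\tfrac12) - 2\log(2) + \psi_0(n+1)$; inserting the two closed forms, the $\gamma$ terms and the $2\log(2)$ terms cancel, leaving exactly $\sum_{k=1}^n\bigl(\tfrac1k - \tfrac{2}{2k-1}\bigr)$. At $s=-n+\tfrac12$ the prefactor is $\tfrac{4^{2n}}{2\pi n}\binom{2n}{n}^{-1}$ and the bracket is $-\psi_0(n) - 2\log(2) + \psi_0(n+\tfrac12)$, where now $\psi_0(n) = -\gamma + \sum_{k=1}^{n-1}\tfrac1k$ via the index shift $\psi_0(n)=\psi_0((n-1)+1)$; after the same cancellation one is left with $-4\log(2) - \sum_{k=1}^{n-1}\tfrac1k + 2\sum_{k=1}^n\tfrac{1}{2k-1}$, as claimed.

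Since every step is an explicit substitution, there is no genuine obstacle here; the work is purely the bookkeeping of the cancellations of $\gamma$ and of the $\log(2)$ contributions. The one point worth flagging is the index shift that shortens the harmonic sum to $n-1$ in the half-integer family and forces the restriction $n\geq 2$ for that sum to be nonempty; note, moreover, that reading the half-integer formula formally at $n=1$ with the empty sum $\sum_{k=1}^{0}\tfrac1k = 0$ reproduces the separately stated value $\zeta_{\mathbb{Z}}'(-\tfrac12)=\tfrac{8}{\pi}(1-2\log(2))$, which is a convenient consistency check.
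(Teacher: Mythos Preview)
Your proposal is correct and follows essentially the same route as the paper: substitute into the logarithmic-derivative identity $\zeta_{\mathbb{Z}}'(s)=\zeta_{\mathbb{Z}}(s)\bigl(-\psi_0(\tfrac12-s)-2\log 2+\psi_0(1-s)\bigr)$, pull the prefactors from \eqref{spec_zeta_Z1}, and evaluate the digamma at integers and half-integers. The only cosmetic difference is that the paper cites the digamma values from Abramowitz--Stegun whereas you derive them from the recursion $\psi_0(x+1)=\psi_0(x)+1/x$; your added consistency check at $n=1$ is a nice touch not present in the paper.
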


\begin{proof}
According to 6.3.2 and 6.3.3 in \cite{AS64}, we have the following values \begin{align} \label{polygamma_1}     \psi_0 \left( \frac{1}{2} \right)=-\gamma -2\log(2)      \quad \text{and} \quad      \psi_0(1)=-\gamma. \end{align} Furthermore, by formulas 6.3.2 and 6.3.4 in \cite{AS64}, we also have \begin{align}      &\psi_0 \left( n+\frac{1}{2}\right)=-\gamma -2\log(2)+2\sum_{k=1}^{n} \frac{1}{2k-1}      \qquad \text{for $n \geq 1$},  \label{polygamma_2}\\     &\psi_0(n)=-\gamma+\sum_{k=1}^{n-1}\frac{1}{k}      \qquad \qquad \qquad \qquad \qquad \quad \; \; \; \text{for $n \geq 2$}. \label{polygamma_3} \end{align} Hence, combining (\ref{polygamma_1}), (\ref{polygamma_2}), (\ref{polygamma_3}) and the special values already computed in (\ref{spec_zeta_Z1}) concludes the proof of the proposition. 
\end{proof}
Note that these values correct (confirmed by computer numerics) small
errors in \cite[Proposition 6.1]{FK17}. Indeed, the second formula
in that reference should have $-4\log2$ instead of $-4\log4$, and
the case $n=1$ needs to be interpreted correctly in view of the term
$1/(n-1)$ appearing. Finally it is stated in \cite{FK17} that $\zeta_{\mathbb{Z}}'(s)$
is zero at the positive integers, this is not true (checked by numerics)
and instead the values are given here:
\begin{prop}
Let $n$ be a positive integer, then     \begin{align*}         \zeta_{\mathbb{Z}}'(n)         = \frac{1}{n} \binom{2n}{n}^{-1}.     \end{align*} 
\end{prop}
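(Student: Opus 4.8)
The plan is to deal with the fact that, as already noted in the excerpt, $\zeta_{\mathbb{Z}}$ has a simple \emph{zero} at each positive integer $n$. Consequently the differentiation formula
\[
\zeta_{\mathbb{Z}}'(s)=\zeta_{\mathbb{Z}}(s)\left(-\psi_0\!\left(\tfrac12-s\right)-2\log2+\psi_0(1-s)\right)
\]
derived above cannot be applied directly at $s=n$: the prefactor $\zeta_{\mathbb{Z}}(n)$ vanishes while $\psi_0(1-s)$ has a pole at $s=n$ (because $1-n$ is a non-positive integer), so the right-hand side is an indeterminate $0\cdot\infty$. The first and decisive step is therefore to isolate the source of the zero.

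To that end I would write $\zeta_{\mathbb{Z}}(s)=g(s)\,h(s)$, where
\[
g(s)=\pi^{-1/2}\,\frac{\Gamma\!\left(\tfrac12-s\right)}{4^{s}},\qquad h(s)=\frac{1}{\Gamma(1-s)}.
\]
At $s=n$ the argument $\tfrac12-n$ is a negative half-integer, hence $g$ is holomorphic and nonzero there; all the vanishing is carried by $h$, the entire reciprocal Gamma function, which has a simple zero at $s=n$ since $1-n\in\{0,-1,-2,\dots\}$. The product rule then gives $\zeta_{\mathbb{Z}}'(n)=g'(n)h(n)+g(n)h'(n)=g(n)h'(n)$, because $h(n)=0$.

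It remains to evaluate the two surviving factors. For $h'(n)$ I would use that the reciprocal Gamma function is entire and, from the residue $\mathrm{Res}_{z=-m}\Gamma(z)=(-1)^m/m!$, satisfies $\frac{d}{dz}\frac{1}{\Gamma(z)}\big|_{z=-m}=(-1)^m m!$ for every integer $m\ge0$; with $z=1-s$ and $m=n-1$ the chain rule yields $h'(n)=(-1)^n(n-1)!$. For $g(n)$ I would insert the value $\Gamma\!\left(\tfrac12-n\right)=(-1)^n 4^n n!\,\sqrt{\pi}/(2n)!$, which gives $g(n)=(-1)^n n!/(2n)!$. Multiplying, $\zeta_{\mathbb{Z}}'(n)=n!(n-1)!/(2n)!$, which is exactly $\tfrac1n\binom{2n}{n}^{-1}$.

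The only genuinely delicate point is the first step: recognizing that the naive differentiation formula degenerates at the zeros and recasting the derivative through the entire factor $1/\Gamma(1-s)$. Once the zero has been localized in this way, both remaining evaluations are standard Gamma-function identities and the computation is routine.
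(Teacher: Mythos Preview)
Your proof is correct and takes a genuinely different route from the paper's. The paper keeps the logarithmic-derivative formula $\zeta_{\mathbb{Z}}'(s)=\zeta_{\mathbb{Z}}(s)\bigl(-\psi_0(\tfrac12-s)-2\log 2+\psi_0(1-s)\bigr)$ and confronts the $0\cdot\infty$ indeterminacy head-on: it applies the reflection identity $\psi_0(1-z)=\psi_0(z)+\pi\cot(\pi z)$ so that the only singular contribution at $s=n$ is $\zeta_{\mathbb{Z}}(n)\,\pi\cot(\pi n)$, and then evaluates this product via the reflection formula for $\Gamma$ together with the double-angle formula for sine. Your factorization $\zeta_{\mathbb{Z}}=g\cdot h$ with $h(s)=1/\Gamma(1-s)$ is cleaner: by isolating the simple zero in an entire factor you avoid any indeterminate form and reduce everything to two standard Gamma evaluations. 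The paper's approach has the virtue of staying within the digamma framework already set up for the preceding proposition, whereas your argument is more transparent and rigorous without having to interpret a formal $0\cdot\infty$ product as a limit.
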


\begin{proof}
     By the reflection formula 6.3.7 in \cite{AS64}, we have     \begin{align*}         \psi_0(1-z)=\psi_0(z)+\pi \cot( \pi z).     \end{align*}     Hence, applying this last formula, as well as (\ref{polygamma_2}) and (\ref{polygamma_3}), gives     \begin{align*}         \zeta_{\mathbb{Z}}'(n)         &= \binom{-2n}{-n} \left(- \psi_0  \left(\frac{1}{2}-s\right)-2\log(2)+ \psi_0(1-n) \right) \\         &= \binom{-2n}{-n} \left( \sum_{k=1}^{n-1}\frac{1}{k} -2 \sum_{k=1}^{n} \frac{1}{2k-1} + \pi \cot(\pi n) \right).     \end{align*}     We observe that for any positive integer $n$, we have $\binom{-2n}{-n}=0$. Therefore the two sums are eliminated by the multiplication with the binomial. However we also note that we have $\cot(\pi n)=\pm \infty$. Hence     \begin{align*}         \zeta_{\mathbb{Z}}'(n)         = \binom{-2n}{-n} \pi \cot(n\pi)         = \frac{\Gamma(1-2n)}{\Gamma(1-n)^2}\pi \cot(n\pi)         = \frac{1}{n} \binom{2n}{n}^{-1},     \end{align*}     where the last equality is obtained first by applying the reflection formula $\Gamma(1-z)\Gamma(z)=\pi\sin^{-1}(\pi z)$, then by applying the recurrence formula $\Gamma(z+1)=z\Gamma(z)$ and finally by using the double-angle formulas for sine.
\end{proof}

\section{Special values of $\zeta_{\mathbb{Z}/n\mathbb{Z}}(s)$ and $\zeta(s)$}

Sums of powers of the sine function are special cases of important
sums in works of Dedekind, Verlinde, Dowker and others in number theory
and physics, see \cite{Do92,Z96,Do15,K20}. From our perspective,
and also partly from Dowker's, they are special values of spectral
zeta functions of discrete circles. It is therefore of interest to
recall the following values, see for example \cite{Me14}. Let $n$ and $m$ be two positive integer, then we have     \begin{align*}        \zeta_{\mathbb{Z}/n\mathbb{Z}}(-m) = 2^{2m}\sum_{k=1}^{n-1} \sin^{2m} \left( \frac{\pi k}{n} \right)         =  n \sum_{k=-\lfloor \frac{m}{n} \rfloor}^{\lfloor \frac{m}{n} \rfloor} (-1)^{kn} \binom{2m}{m+kn}.     \end{align*} In the special case $m<n$, we have     
\begin{align*}       \zeta_{\mathbb{Z}/n\mathbb{Z}}(-m)  =  2^{2m}\sum_{k=1}^{n-1} \sin^{2m} \left( \frac{\pi k}{n} \right)         = n \binom{2m}{m}.     \end{align*}

Comparing the last formula to the one of $\zeta_{\mathbb{Z}}(-m)$
above (\ref{spec_zeta_Z1}), one sees a manifestation of all the trivial
zeros of the Riemann zeta function at the negative even integers,
in view of the asymptotics in \cite{Si04,FK17,MV22}
\[
\sum_{k=1}^{n-1}\frac{1}{\sin^{s}(k\pi/n)}=\frac{1}{\sqrt{\pi}}\frac{\Gamma(1/2-s/2)}{\Gamma(1-s/2)}n+2\pi^{-s}\zeta(s)n^{s}+\frac{s}{3}\pi^{2-s}\zeta(s-2)n^{s-2}+...
\]

as $n\rightarrow\infty.$ This asymptotic relation demonstrates the
intimate connection between the three zeta functions appearing in
this paper, $\zeta(s)$, $\zeta_{\mathbb{Z}}(s)$ and $\zeta_{\mathbb{Z}/n\mathbb{Z}}(s)$.

Friedli provided us with the proof of the following formula, empirically
found in \cite{Pa22}:
\begin{prop}
\label{prop:sine}Let $n$ and $m$ be positive integer, then      \begin{align*}      \zeta_{\mathbb{Z}/n\mathbb{Z}} \left(-\frac{1}{2}-m\right)    =  2^{2m+1} \sum_{k=1}^{n-1} \sin^{2m+1} \left( \frac{\pi k}{n} \right)         = 2 \sum_{j=0}^{m} (-1)^{m-j} \binom{2m+1}{j} \cot \left(  \frac{2m+1-2j}{2n}\pi \right).     \end{align*}     
\end{prop}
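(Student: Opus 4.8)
The first equality is immediate from the definition $\zeta_{\mathbb{Z}/n\mathbb{Z}}(s)=4^{-s}\sum_{k=1}^{n-1}\sin^{-2s}(\pi k/n)$: setting $s=-\tfrac{1}{2}-m$ gives $4^{-s}=2^{2m+1}$ and $\sin^{-2s}=\sin^{2m+1}$. The content is therefore the second equality, and the plan is to expand the odd power of the sine by de Moivre's formula and then interchange the order of summation with a geometric sum over $k$.

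First I would write $\sin\theta=(e^{i\theta}-e^{-i\theta})/(2i)$ and apply the binomial theorem. Simplifying $(2i)^{-(2m+1)}=i(-1)^{m+1}2^{-(2m+1)}$ yields
\[
2^{2m+1}\sin^{2m+1}\theta=i(-1)^{m+1}\sum_{j=0}^{2m+1}(-1)^{j}\binom{2m+1}{j}e^{i(2m+1-2j)\theta}.
\]
Setting $\theta=\pi k/n$ and summing over $k=1,\dots,n-1$, I would interchange the two sums so that the inner sum is a geometric series $\sum_{k=1}^{n-1}\omega^{k}$ with ratio $\omega=e^{i(2m+1-2j)\pi/n}$.

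The crucial observation is that the exponent $a_{j}:=2m+1-2j$ is odd for every $j$, so that $\omega^{n}=e^{ia_{j}\pi}=(-1)^{a_{j}}=-1$, and moreover $\omega\neq1$ because an odd integer is never a multiple of $2n$. Hence the geometric sum evaluates cleanly, using the half-angle identity $\tfrac{1+e^{i\alpha}}{1-e^{i\alpha}}=i\cot(\alpha/2)$:
\[
\sum_{k=1}^{n-1}\omega^{k}=\frac{\omega-\omega^{n}}{1-\omega}=\frac{1+\omega}{1-\omega}=i\cot\!\Big(\frac{a_{j}\pi}{2n}\Big).
\]
Substituting this back and collecting the two factors of $i$ turns the whole expression into $(-1)^{m}\sum_{j=0}^{2m+1}(-1)^{j}\binom{2m+1}{j}\cot\big((2m+1-2j)\pi/(2n)\big)$.

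Finally I would fold this sum down to the range $0\le j\le m$. Under the substitution $j\mapsto 2m+1-j$ the binomial coefficient is unchanged, $a_{j}$ changes sign (so $\cot$ picks up a minus sign, being odd), and $(-1)^{j}$ also changes sign; the two sign changes cancel, so the $j$-th and $(2m+1-j)$-th terms are equal. Since $2m+1$ is odd, the indices $0,1,\dots,2m+1$ pair off perfectly into $m+1$ pairs, producing a factor $2$ together with the range $0\le j\le m$; rewriting $(-1)^{m+j}=(-1)^{m-j}$ then gives exactly the claimed closed form. The only real obstacle is the careful bookkeeping of the powers of $i$ and the various signs through the binomial expansion, the geometric-sum evaluation, and the folding step; once the geometric sum is in hand everything else is a direct computation.
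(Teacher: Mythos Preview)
Your proof is correct and follows essentially the same route as the paper: expand the odd power of $\sin$ via $(e^{i\theta}-e^{-i\theta})/(2i)$ and the binomial theorem, sum the resulting geometric series in $k$, and fold the range $0\le j\le 2m+1$ to $0\le j\le m$ using the symmetry $j\mapsto 2m+1-j$. If anything, you are more explicit than the paper in justifying the geometric-sum step (you note that $a_j=2m+1-2j$ is odd, forcing $\omega^n=-1$ and $\omega\neq 1$), which the paper states without argument.
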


\begin{proof}
First we recall
\[
\sin^{2m+1}(k\pi/n)=(2i)^{-2m-1}\left(e^{i\pi k/n}-e^{-i\pi k/n}\right)^{2m+1}
\]
\[
=(2i)^{-2m-1}\sum_{j=0}^{2m+1}(-1)^{j}\left(\begin{array}{c}
2m+1\\
j
\end{array}\right)e^{i\pi k(2j-2m-1)/n}.
\]
This gives switching the finite sums
\[
\sum_{k=1}^{n-1}\sin^{2m+1}(k\pi/n)=(2i)^{-2m-1}\sum_{j=0}^{2m+1}(-1)^{j}\left(\begin{array}{c}
2m+1\\
j
\end{array}\right)\sum_{k=1}^{n-1}e^{i\pi k(2j-2m-1)/n}.
\]
The interior sum is of geometric type and can therefore be summed
and equals 
\[
-i\cdot\cot\left(\pi(2j-2m-1)/2n\right).
\]
Observe that there is a symmetry $j$ vs $2m+1-j$
\[
\cot\left(\pi(2(2m+1-j)-2m-1)/2n\right)=-\cot\left(\pi(2j-2m-1)/2n\right).
\]
Using the same symmetry for the binomial coefficients one arrives
at \begin{align*}         \sum_{k=1}^{n-1} \sin^{2m+1} \left( \frac{\pi k}{n} \right)         = \frac{1}{2^{2m}} \sum_{j=0}^{m} (-1)^{m-j} \binom{2m+1}{j} \cot \left( \left( \frac{2m+1-2j}{2n} \right) \pi \right).     \end{align*}   
\end{proof}
Thanks to the above asymptotics and the well-known series expansion
of the cotangent function 
\[
\cot(z)=\sum_{n=0}^{\infty}\frac{(-1)^{n}2^{2n}B_{2n}}{(2n)!}z^{2n-1},
\]
 one can deduce Euler's celebrated formulas that for positive integers
$m$,
\[
\zeta(-m)=(-1)^{m}\frac{B_{m+1}}{m+1},
\]
in particular $\zeta(s)$ vanishes at the negative even integers,
and $\zeta(0)=-1/2$. Indeed, setting $s=0$ in the asymptotics one
has
\[
n-1=\frac{1}{\sqrt{\pi}}\frac{\Gamma(1/2)}{\Gamma(1)}n+2\pi^{0}\zeta(0)n^{0}+...
\]
giving $\zeta(0)=-1/2.$ Specializing to $s=-1$ produces
\[
\sum_{k=1}^{n-1}\sin(k\pi/n)=\frac{1}{\sqrt{\pi}}\frac{\Gamma(1)}{\Gamma(3/2)}n+2\pi\zeta(-1)n^{-1}+\frac{-1}{3}\pi^{3}\zeta(-3)n^{-3}+...
\]
and on the other hand from Proposition \ref{prop:sine}, 
\[
\sum_{k=1}^{n-1}\sin(k\pi/n)=\cot(\pi/2n)=\frac{2n}{\pi}B_{0}+\frac{\pi}{2n}(-2)B_{2}+\frac{\pi^{3}}{8n^{3}}\frac{16B_{4}}{24}+O(1/n^{5}).
\]
This gives that 
\[
\zeta(-1)=-\frac{B_{2}}{2},\textrm{ \ensuremath{\zeta(-3)=-\frac{B_{4}}{4},...}}
\]

In view of the functional equation of $\zeta(s)$ one can then get
the values of $\zeta(2m)$ such as $\pi^{2}/6$, $\pi^{4}/90$ etc.
which appear in the volume formulas in the introduction. This, in
turn, again via the above stated asymptotics (the asymptotics for
these special $s$ here is just a formula due to the trivial zeros
$\zeta(-2m)=0$) gives the values in closed form of
\[
\zeta_{\mathbb{Z}/n\mathbb{Z}}(m)=\frac{1}{4^{s}}\sum_{k=1}^{n-1}\frac{1}{\sin^{2m}(\pi k/n)}.
\]
For example,
\[
\zeta_{\mathbb{Z}/n\mathbb{Z}}(1)=0n+\frac{2}{4\pi^{2}}\frac{\pi^{2}}{6}n^{2}+\frac{2}{12}\pi^{2-2}\left(-\frac{1}{2}\right)+0=\frac{1}{12}(n^{2}-1)
\]
and
\[
\zeta_{\mathbb{Z}/n\mathbb{Z}}(2)=\frac{1}{720}\left(n^{4}+10n^{2}+11\right).
\]

An easier approach to these last two formulas can be found in \cite{Do92,Z96}.
The values of $\zeta_{\mathbb{Z}/n\mathbb{Z}}$ at the positive half-integral
points do not have such polynomial expressions, which is related to
the elusive nature of the zeta values $\zeta(2m+1)$, $m>0.$

Finally, let us remark that the Riemann Hypothesis can be reformulated
solely in terms of a hypothetical functional symmetry of the standard
type $s\longleftrightarrow1-s$ for $\zeta_{\mathbb{Z}}(s/2)$ and
$\zeta_{\mathbb{Z}/n\mathbb{Z}}(s/2)$. This is not an incidental
fact, as it was shown to extend in \cite{F16} and \cite{MV22} to
some cases of a generalized Riemann Hypothesis.

\lyxaddress{Section de mathématiques, Université de Genève, Case postale 64,
1211 Genève, Switzerland; Mathematics department, Uppsala University,
Box 256, 751 05 Uppsala, Sweden.}

\begin{thebibliography}{AAR99}
\bibitem[AS64]{AS64} Abramowitz, Milton; Stegun, Irene A. Handbook
of mathematical functions with formulas, graphs, and mathematical
tables. For sale by the Superintendent of Documents. National Bureau
of Standards Applied Mathematics Series, No. 55 U. S. Government Printing
Office, Washington, D.C., 1964 xiv+1046 pp.

\bibitem[AAR99]{AAR99}Andrews, George E.; Askey, Richard; Roy, Ranjan,
Special functions. Encyclopedia of Mathematics and its Applications,
71. Cambridge University Press, Cambridge, 1999. xvi+664 pp.

\bibitem[Do92]{Do92}Dowker, J. S. On Verlinde's formula for the dimensions
of vector bundles on moduli spaces. J. Phys. A 25 (1992), no. 9, 2641\textendash 2648.

\bibitem[Do15]{Do15}Dowker, J.S. On sums of powers of cosecs , https://arxiv.org/pdf/1507.01848.pdf

\bibitem[D19]{D19}Dubout, Jérémy, Zeta functions of graphs, their
symmetries and extended Catalan numbers, https://arxiv.org/abs/1909.01659

\bibitem[F16]{F16}Friedli, Fabien, A functional relation for L-functions
of graphs equivalent to the Riemann hypothesis for Dirichlet L-functions.
J. Number Theory 169 (2016), 342\textendash 352.

\bibitem[FK17]{FK17}Friedli, Fabien; Karlsson, Anders, Spectral zeta
functions of graphs and the Riemann zeta function in the critical
strip. Tohoku Math. J. (2) 69 (2017), no. 4, 585\textendash 610.

\bibitem[H71]{H71}Harder, G.: A Gauss-Bonnet formula for discrete
arithmetically defined groups. Ann. Sci. Ec. Norm. Supér. 4e série,
tome 4, n\textdegree{} 3, 409-455 (1971).

\bibitem[K20]{K20}Karlsson, Anders, Spectral zeta functions. Discrete
and continuous models in the theory of networks, 199\textendash 211,
Oper. Theory Adv. Appl., 281, Birkhäuser/Springer, Cham, 2020.

\bibitem[MV22]{MV22}Meiners, Alexander; Vertman, Boris, Spectral
zeta function on discrete tori and Epstein-Riemann conjecture, https://arxiv.org/abs/2202.02420

\bibitem[Me14]{Me14}Merca, Mircea, On some power sums of sine or
cosine. Amer. Math. Monthly 121 (2014), no. 3, 244\textendash 248.

\bibitem[Pa22]{Pa22}Pallich, Massimiliano, The Riemann Zeta Function
and the Spectral Zeta Function of Graphs, Master Thesis, University
of Geneva, 2022

\bibitem[P01]{P01}Parshin, A. N. Note on the Siegel formula. Algebraic
geometry, 11. J. Math. Sci. (New York) 106 (2001), no. 5, 3336\textendash 3339.

\bibitem[Si04]{Si04}Sidi, Avram, Euler-Maclaurin expansions for integrals
with endpoint singularities: a new perspective. Numer. Math. 98 (2004),
no. 2, 371\textendash 387.

\bibitem[Z96]{Z96}Zagier, Don Elementary aspects of the Verlinde
formula and of the Harder- Narasimhan-Atiyah-Bott formula. Proceedings
of the Hirzebruch 65 Conference on Algebraic Geometry (Ramat Gan,
1993), 445\textendash 462, Israel Math. Conf. Proc., 9, Bar-Ilan Univ.,
Ramat Gan, 1996.
\end{thebibliography}
\end{document}